\theoremstyle{plain}
\newtheorem{thm}{Theorem}
\newtheorem{lem}[thm]{Lemma}
\newtheorem{cor}[thm]{Corollary}
\newtheorem{prop}[thm]{Proposition}
\theoremstyle{definition}
\newtheorem{exmp}[thm]{Example}
\newtheorem{conj}[thm]{Conjecture}
\theoremstyle{remark}
\newtheorem{rem}[thm]{Remark}
\title{On the abelianity of the stochastic sandpile model.}
\author{Fran\c cois Nunzi\\
\small IRIF and Universit\'e Paris Diderot, Case 7014\\
\small F-75205 Paris Cedex 13\\
\small France\\
\small \tt fnunzi@liafa.univ-paris-diderot.fr}
\date{\today}
\begin{document}

\maketitle

\begin{abstract}
We consider a stochastic variant of the Abelian Sandpile Model (ASM) on a finite graph, introduced by Chan, Marckert and Selig. 
Even though it is a more general model, some nice properties still hold. We show that on a certain probability space, even if we lose the group structure due to topplings not being deterministic, some operators still commute. 
As a corollary, we show that the stationary distribution still does not depend on how sand grains are added onto the graph in our model, answering a conjecture of Selig.

\bigskip\noindent \textbf{Keywords:} Markov chain, Combinatorics, Sandpile Model
\end{abstract}


\section{Introduction}

The abelian sandpile model (ASM in the rest of the paper) was introduced by Bak, Tang and Wiesenfeld \cite{BTW}, as an example of a dynamical system displaying self-organized criticality.
A very similar model, known as the chip-firing game, had been introduced by Spencer \cite{Spencer} a little earlier : we consider a pile of $N$ chips in the center of a long path, and at each step, the player moves $\lfloor\frac{N}{2}\rfloor$ to the left and $\lfloor\frac{N}{2}\rfloor$ to the right. He then does the same thing with the two new piles obtained, in whatever order, and so on, until no site contains more than one chip. This game was then refined, allowing the player to move only two chips (one to the left and one to the right) at each step, and starting with any configuration, and then a generalization to simple graphs was studied by Bj\"{o}rner, Lov\'asz and Shor in \cite{BLS}.
The ASM is a Markov chain and can be seen as a succession of chipfiring games on a graph, the states being configurations where no vertex can chipfire : at each step, we add a chip (or sand grain) on one of the vertices of the graph, we then proceed as if we were playing a chipfiring game, until no vertex can chipfire (or topple), thus getting to our new state.
Many papers are dedicated to the mathematical aspects of the ASM, and we will just cite Redig's \cite{Redig} and Dhar's review \cite{Dhar}. A first stochastic extension, known as the Manna model, introduced in \cite{Manna}, and studied further by Dahr \cite{Dhar99}, lead to an extension of ASM, called the stochastic sandpile model (SSM from now on), studied by Chan, Marckert and Selig \cite{CMS}. In this model, when a vertex topples, it only gives sand grains to some of its neighbors, in such a way that every partial toppling may occur. The authors of \cite{CMS} studied the recurrent states, and the Lacking polynomial which is very similar to the Tutte polynomial.
In his thesis \cite{Selig}, Thomas Selig stated a conjecture : 
\begin{conj}\label{Conj}
	Let $G=(V,E)$ be a graph and $\mu$ be a probability distribution on $V$. Then the stationary distribution for the SSM where grain additions are made according to the distribution $\mu$ does not depend on $\mu$.
\end{conj}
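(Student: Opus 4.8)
The plan is to recover, in the stochastic setting, the two ingredients behind the classical argument for the ASM: an abelian (confluence) property for the toppling dynamics, and uniqueness of the stationary distribution. The absence of a group structure is circumvented by fixing the randomness of the topplings in advance. Concretely, I would work on a probability space carrying, for each vertex $v$, an independent sequence $(\xi^{(1)}_v,\xi^{(2)}_v,\dots)$ of i.i.d.\ ``toppling instructions'' (the random set of edges along which $v$ sends a grain when it topples, with the SSM law), and decree that the $k$-th toppling of $v$ in \emph{any} legal toppling sequence is carried out according to $\xi^{(k)}_v$. The key lemma is then a statement that is deterministic in $\omega=(\xi^{(k)}_v)$: from a configuration $c$, every maximal legal toppling sequence terminates (the almost-sure finiteness of stabilisation, which I would take from \cite{CMS}) and reaches the same stable configuration $\mathrm{Stab}(c,\omega)$, \emph{with the same number of topplings of each vertex}.

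As in the ASM, this lemma reduces, via a Newman-type argument using termination, to local confluence: from an unstable $d$, if $u\neq v$ can both topple, then toppling $u$ then $v$ yields the same configuration as toppling $v$ then $u$. The point is that a toppling at $u$ only raises the heights of the other vertices, so $v$ stays topple-able, and it does not change how many times $v$ has already toppled; hence in both orders the $u$-toppling consumes the same instruction $\xi^{(k_u+1)}_u$ and the $v$-toppling the same $\xi^{(k_v+1)}_v$, and the two single-site topplings commute because they act additively on disjoint pieces of data. Partial topplings (a vertex that topples yet remains unstable) cause no difficulty, since each toppling is treated as one discrete step.

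From the lemma I would deduce that the stochastic addition operators $a_i$ --- ``add a grain at $i\in V$ and stabilise'' --- commute as Markov kernels on stable configurations, $a_ia_j=a_ja_i$. On the common space, performing $a_i$ and then $a_j$, each stabilisation consuming fresh instructions in order from the $\xi^{(k)}_v$, realises a maximal legal toppling sequence from $c+\delta_i+\delta_j$: monotonicity of the toppling threshold guarantees that the topplings done while stabilising $c+\delta_i$ remain legal once the extra grain at $j$ is present, and one then continues from $\mathrm{Stab}(c+\delta_i,\omega)+\delta_j$. By the lemma this sequence ends at $\mathrm{Stab}(c+\delta_i+\delta_j,\omega)$, and the set of instructions it uses depends only on the (order-independent) toppling vector; hence $a_ia_j$ and $a_ja_i$ induce the same law.

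With commuting kernels the $\mu$-independence follows by the usual soft argument. Fix $\mu$ with full support and let $\pi$ be the stationary distribution of $P_\mu=\sum_i\mu_i a_i$, unique because the SSM has a single recurrent class (exactly as for the ASM: SSM topplings include the full toppling with positive probability, so the chain can simulate the ASM on recurrent states). For any $j\in V$, commutativity gives $(\pi a_j)P_\mu=\sum_i\mu_i\,\pi a_ja_i=\sum_i\mu_i\,\pi a_ia_j=(\pi P_\mu)a_j=\pi a_j$, so $\pi a_j$ is stationary for $P_\mu$ and therefore equals $\pi$. Thus $\pi a_j=\pi$ for every $j$, whence $\pi P_\nu=\sum_j\nu_j(\pi a_j)=\pi$ for \emph{every} distribution $\nu$, and uniqueness forces $\pi_\nu=\pi$, proving Conjecture~\ref{Conj}. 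The step I expect to be the real obstacle is the local-confluence argument with the randomness bookkeeping: one must be certain that reordering two topplings never desynchronises the instruction sequences --- i.e.\ that the number of topplings of each vertex, not merely the final configuration, is order-independent --- and that termination of stabilisation is preserved when a grain is added in between, all on one probability space.
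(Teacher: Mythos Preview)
Your proposal is correct and follows essentially the same route as the paper: pre-sample the toppling randomness as an i.i.d.\ ``deck'' $(\xi_v^{(k)})$ at each site, observe that the resulting deterministic toppling operators commute so that stabilisation is order-independent (the paper proves this via a maximal-counter lemma rather than Newman's lemma, but the content is the same), deduce that the addition kernels $a_i$ commute, and conclude by the standard eigenvector argument that any stationary $\pi$ for $P_\mu$ satisfies $\pi a_j=\pi$ for every $j$. The only caveat is your justification of uniqueness of $\pi$: you invoke that ``SSM topplings include the full toppling with positive probability'', which holds for the Bernoulli SSM of \cite{CMS} (the setting of the original conjecture) but not in the paper's more general framework, where irreducibility is left as an assumption rather than proved.
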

The aim of our paper is to prove this conjecture and generalize it to the case where only a subset of the set of partial topplings my occur. 
In order to do that, we will need to describe the Markov chain as some sort of game, with decks of cards on each site, governing the way those sites will topple. The deck presentation was used by Diaconis and Fulton in \cite{DF}, in which they describe a game where particles are randomly moved on a set of sites until no site contains more than one particle.
We will do it in a slightly more general case than the one described in \cite{CMS}, since we will not suppose that all partial topplings can occur.


\section{Informal model description}

In this section, we will recall some well known results about the abelian sandpile model (ASM), as described for instance in \cite{Redig}, thus giving motivation for the search of similar results in a more general model.


\subsection{The abelian sandpile model}
\subsubsection{Quick description}

The ASM is a Markov chain operating on a connected graph $G=(V\cup \{s\},E)$ where each vertex $v\in V$ has a positive counter $\eta(v)$, indicating how many sand grains are on it, and where $s$ is a special vertex called the sink, playing a different role described below. If the number of grains on a vertex $v\in V$ is smaller or equal than its number $d^G(v)$ of neighbors, $v$ is said to be \textit{stable}, and it is said to be \textit{unstable} otherwise. A \textit{configuration} $\eta$ of sand grains on $G$ is \textit{stable} if each vertex in $V$ is stable (i.e. $\eta(v)\leq d^G(v)$ for all $v\in V$), and \textit{unstable} otherwise. The state space for the ASM is the set of stable configurations.

At each step of the Markov chain, a sand grain is added at a random vertex $v\in V$ (according to a distribution $\mu$). If this vertex becomes unstable, it then \textit{topples}, giving a sand grain to each of its neighbors, which may in turn become unstable and topple. One of the vertices, called the \textit{sink}, and denoted $s$, behaves in a different way : it can absorb any number of sand grains and never topples. The topplings go on until we reach a new stable configuration (which will be our new state).
The recurrent states of the Markov Chain will be called \textit{recurrent configurations}.

\begin{exmp}
	An example of the ASM is given in Figure \ref{fig:asmex}. Only the recurrent configurations are being displayed here. The graph is a triangle with one of the three vertices being the sink, and sand grains can be added on both vertices in $V$.
	The transition matrix in the basis (\textcircled{1},\textcircled{2},\textcircled{3}) reads 
	$$\begin{pmatrix}
	0&\alpha&\beta\\
	\beta&0&\alpha\\
	\alpha&\beta&0	
	\end{pmatrix}.$$
	Note that $(1,1,1)$ is a left eigenvector with eigenvalue $\alpha+\beta=1$.
\end{exmp}

\begin{figure}[ht]
	\centering
	\includegraphics{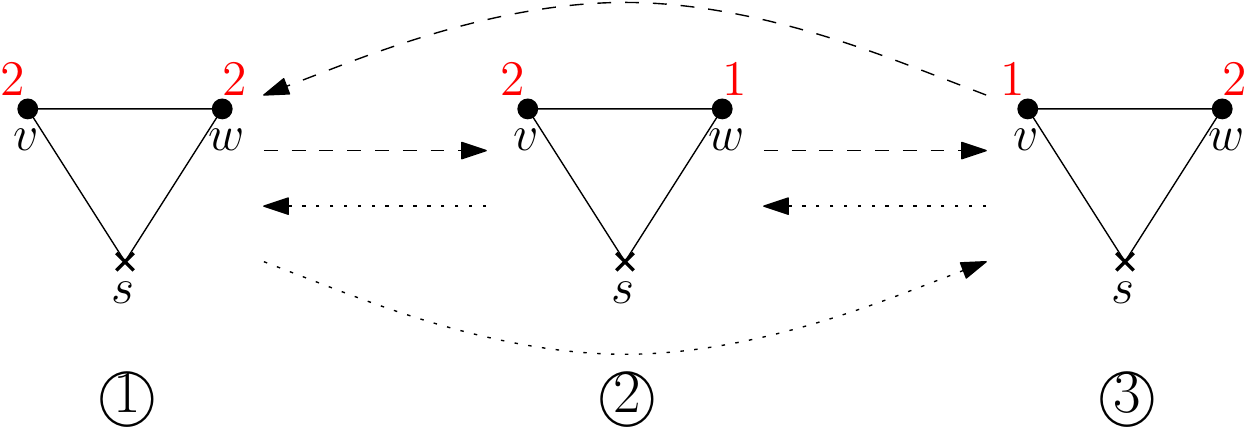}
	\caption{Dashed arrows correspond to transitions where we add a sand grain on $v$ (which happens with probability $\alpha$), and dotted arrows on $w$ (probability $\beta=1-\alpha$).}
	\label{fig:asmex}
\end{figure}

\subsubsection{Some results}

There are two important things to notice : first, the fact that the graph is connected ensures that the toppling process will end (if there are too many sand grains, some of them will necessary reach the sink), and second, the resulting stable state does not depend on the order in which vertices topple. Those two facts ensure that the Markov chain described above is well defined.

We talk about \textit{abelian} sandpile model because of the following result which can be found in \cite{Redig}.

\begin{prop}
	\label{prop:abelian}
	For $v\in V$, let $A_v$ be the operator ``add a sand grain on $v$ and make topplings until you reach a stable configuration''. 
	Then $\{A_v, v\in V\}$ is a commutative group acting on the set of recurrent configurations.
\end{prop}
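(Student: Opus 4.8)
The plan is to deduce everything from the order\mbox{-}independence of topplings stressed just before the statement, which makes the \emph{stabilization} map $\mathrm{stab}(\cdot)$ — ``topple unstable vertices in any order until none remains'' — a well\mbox{-}defined map from arbitrary configurations to stable ones, with $A_v\eta=\mathrm{stab}(\eta+\delta_v)$ where $\delta_v$ denotes the configuration with a single grain at $v$ and none elsewhere. The one genuine step is an \emph{interleaving lemma}: for every configuration $\eta$ and every $v\in V$,
$$\mathrm{stab}\big(\mathrm{stab}(\eta)+\delta_v\big)=\mathrm{stab}(\eta+\delta_v).$$
To prove it, fix a sequence of topplings witnessing $\eta\rightsquigarrow\mathrm{stab}(\eta)$; since adding the extra grain $\delta_v$ only ever raises grain counts, each of these topplings is still legal when performed from $\eta+\delta_v$. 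Thus from $\eta+\delta_v$ one may first replay all of them, reaching $\mathrm{stab}(\eta)+\delta_v$, and then keep toppling to a stable configuration; by order\mbox{-}independence the outcome is $\mathrm{stab}(\eta+\delta_v)$. (Conceptually this is the ``least action principle'': the vector counting how often each vertex topples during a stabilization of $\eta$ is the coordinatewise\mbox{-}least $u\ge 0$ with $\eta-\Delta u$ stable, $\Delta$ being the reduced Laplacian relative to the sink, and this vector is monotone in $\eta$.) I expect this lemma — i.e.\ the abelian property of topplings itself — to be the main, though not deep, obstacle; the two facts highlighted before the statement (termination and order\mbox{-}independence) reduce it to bookkeeping.

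Commutativity of the operators is then immediate. For any stable configuration $\eta$,
$$A_vA_w\eta=\mathrm{stab}\big(\mathrm{stab}(\eta+\delta_w)+\delta_v\big)=\mathrm{stab}(\eta+\delta_w+\delta_v)=\mathrm{stab}(\eta+\delta_v+\delta_w)=A_wA_v\eta,$$
using the interleaving lemma twice together with the commutativity of grain addition. Hence the $A_v$ pairwise commute as self\mbox{-}maps of the finite set $\mathcal S$ of stable configurations, each $A_v$ maps $\mathcal S$ into $\mathcal S$, and the monoid $M$ they generate is commutative.

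It remains to restrict to the recurrent configurations and obtain a group. Because the $A_v$ commute and preserve $\mathcal S$, the subsets $\big(\prod_{w\in V}A_w^{\,k}\big)(\mathcal S)$ of the finite set $\mathcal S$ decrease with $k$, hence are eventually constant; call the limit $\mathcal R$ and fix $k$ with $\mathcal R=\big(\prod_w A_w^{\,k}\big)(\mathcal S)=\big(\prod_w A_w^{\,k+1}\big)(\mathcal S)$. Then $A_v\mathcal R=\big(\prod_w A_w^{\,k}\big)(A_v\mathcal S)\subseteq\big(\prod_w A_w^{\,k}\big)(\mathcal S)=\mathcal R$, while $A_v\mathcal R=A_v^{\,k+1}\big(\prod_{w\ne v}A_w^{\,k}\big)(\mathcal S)\supseteq\big(\prod_w A_w^{\,k+1}\big)(\mathcal S)=\mathcal R$; so $A_v(\mathcal R)=\mathcal R$ for every $v$. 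Thus each $A_v$ is a surjection, hence (by finiteness of $\mathcal R$) a permutation, of $\mathcal R$, and $M$ restricts to a commutative submonoid of the symmetric group on $\mathcal R$. A submonoid of a finite group is a subgroup: every $m\in M$ has finite multiplicative order, so $m^{\,n}=\mathrm{id}$ for some $n\ge 1$, giving $m^{-1}=m^{\,n-1}\in M$. Hence $\{A_v:v\in V\}$ generates a commutative group acting on $\mathcal R$.

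Finally I would check that $\mathcal R$ is exactly the set of recurrent configurations of the Markov chain: a stable configuration is recurrent precisely when it can be reached by arbitrarily long sequences of grain additions and stabilizations, i.e.\ when it lies in $\big(\prod_w A_w^{\,k}\big)(\mathcal S)$ for all $k$, which is membership in $\mathcal R$. Combined with the previous paragraph this proves Proposition \ref{prop:abelian}.
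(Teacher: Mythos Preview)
The paper does not give its own proof of Proposition~\ref{prop:abelian}; it is quoted as a classical result with a reference to Redig's survey. Your argument is correct and is essentially the standard one found there: establish the interleaving identity $\mathrm{stab}(\mathrm{stab}(\eta)+\delta_v)=\mathrm{stab}(\eta+\delta_v)$ from monotonicity and order\mbox{-}independence of topplings, deduce commutativity of the $A_v$, and then use finiteness to turn the commutative monoid into a group on the eventual image $\mathcal R$. This is also the mechanism the present paper exploits later, in Section~\ref{sec:thm}, where the deck\mbox{-}of\mbox{-}cards coupling plays the role of your interleaving lemma to prove $P^{(k)}P^{(\ell)}=P^{(\ell)}P^{(k)}$ in the stochastic setting; so your approach and the paper's later argument are in the same spirit.

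One minor remark on your last paragraph: the identification of $\mathcal R$ with the Markov\mbox{-}chain recurrent states, as you phrase it, tacitly uses that $\mu$ has full support (so that the word $\prod_w A_w^{\,k}$ is realizable by the chain with positive probability, forcing every recurrent state into $\mathcal R$; the converse inclusion follows since each $A_v$ is a bijection on $\mathcal R$, hence of finite order, giving return paths). This is the usual convention in the ASM literature and does not affect the correctness of the proof.
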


As a corollary, we have the following property :

\begin{cor}
	The stationary distribution for the ASM  is the uniform distribution on recurrent configurations, and thus it does not depend on how we add sand grains on the graph (it does not depend on the distribution $\mu$).
\end{cor}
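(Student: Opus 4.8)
The plan is to deduce the corollary from Proposition \ref{prop:abelian} by showing that the one-step transition operator of the chain, restricted to the set $\mathcal{R}$ of recurrent configurations, is doubly stochastic, and then appealing to uniqueness of the stationary distribution of an irreducible finite Markov chain. Concretely, I would write the transition operator restricted to $\mathcal{R}$ as $P=\sum_{v\in V}\mu(v)\,A_v$, where each $A_v$ is regarded as the $\mathcal{R}\times\mathcal{R}$ matrix of the map ``add a grain at $v$ and stabilize.''

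By Proposition \ref{prop:abelian}, each $A_v$ belongs to a group acting on $\mathcal{R}$, so it has an inverse in that group and therefore acts as a bijection of the finite set $\mathcal{R}$. Hence the matrix of $A_v$ is a permutation matrix, which is doubly stochastic; in particular the all-ones row vector is a left fixed vector of $A_v$. A convex combination of doubly stochastic matrices is again doubly stochastic, so $P$ is doubly stochastic, i.e. the uniform probability vector on $\mathcal{R}$ satisfies $\pi P=\pi$. Thus the uniform distribution on $\mathcal{R}$ is a stationary distribution, and this computation used nothing about $\mu$ beyond its being a probability vector.

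It then remains to argue that the chain restricted to $\mathcal{R}$ is irreducible, so that its stationary distribution is unique and must be the uniform one. This is where I would use that $\mathcal{R}$ is, by definition, the recurrent class of the chain (together with the standing assumption that $\mu$ gives positive mass to every vertex of $V$), which forces the restricted chain to be a single communicating class. Irreducibility plus the previous paragraph then pins the stationary distribution down as the uniform distribution on $\mathcal{R}$, independently of $\mu$, which is exactly the assertion of the corollary.

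The main obstacle is not the linear algebra — the observation that permutation matrices, and hence their convex combinations, are doubly stochastic is immediate once Proposition \ref{prop:abelian} is available — but the bookkeeping around irreducibility and the support of $\mu$: one must check that the notion ``recurrent configuration'' is the same for every admissible $\mu$, so that the phrase ``does not depend on $\mu$'' is meaningful, and that the restricted chain really is irreducible. Once that is in place, the proof is just the two short steps above.
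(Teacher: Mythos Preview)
Your argument is correct and is exactly the standard deduction from Proposition~\ref{prop:abelian}: each $A_v$ acts as a permutation of $\mathcal{R}$, hence its matrix is a permutation matrix, hence $P=\sum_v\mu(v)A_v$ is doubly stochastic, and irreducibility on $\mathcal{R}$ forces the stationary law to be uniform. The paper does not actually spell out a proof of this corollary --- it is quoted as background from \cite{Redig} and left as an immediate consequence of the group structure --- so your write-up is precisely the argument the reader is expected to supply. Your caveat about the support of $\mu$ and the $\mu$-independence of the set $\mathcal{R}$ is well placed; in the classical ASM this is handled by the intrinsic characterization of recurrent configurations (e.g.\ via the burning test), which shows $\mathcal{R}$ does not depend on $\mu$.
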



\subsection{The Stochastic Sandpile Model (SSM)}

\subsubsection{The Bernouilli Stochastic Sandpile Model (BSSM)}

This is the model described in \cite{BLS}, and is a generalization of the ASM. Here, randomness not only happens when sand grains are added onto the graph, but topplings are also random. More precisely, we fix $p\in [0,1]$, and when a vertex is unstable, each neighbor independently has a probability $p$ of receiving a grain from that vertex. The important thing about this model is that given an unstable vertex $v$ and a subset $D_v$ of $v$'s neighbors, there is a nonzero probability $D_v$ sends a sand grain exactly to the vertices of $D_v$. We can rephrase the last statement as follows : every partial toppling may occur.

\subsubsection{Quick description of the SSM}

The model described here and studied in the rest of the paper is a generalization of the BSSM in which only a subset of the partial topplings may occur.
The Markov chain is still operating on a graph $G=(V\cup{s},E)$ with sand grains, but this time for each $v\in V$, we have a probability distribution $\lambda_v$ on partial topplings of $v$ and a toppling value $M_v$ indicating how many sand grains are needed for $v$ to topple ($M_v$ will be the greatest number of sand grains $v$ can give away when toppling). Each time $v$ topples, it sends sand grains to only a subset of its neighbors, chosen at random according to $\lambda_v$.
One can notice that the graph structure becomes irrelevant, and we can instead consider the complete graph, or just the set of vertices, since all the needed information is contained in $(\lambda_v,v\in V)$.

\textit{Configurations} are defined in the same way as for the SSM, but for $\eta$ a configuration, a vertex $v$ will be said to be \textit{stable} if $\eta(v)$ is at most $M_v$, and \textit{unstable} otherwise.

Note that the Markov chain becomes a lot more complicated, since it is much harder to compute the transition probability from one stable state to another.
There is however a relatively simple way to do it by considering an extended Markov chain. This is explained in \cite{Selig}, will be detailed in Section \ref{sec:matrix}, and the following will be helpful.

\begin{exmp}
	\label{ex:ssmex}
	An example of the SSM is given in Figure \ref{fig:ssmex}. The graph is again a triangle with one of the three vertices being the sink. We consider that each vertex has 3 ways to topple, either by giving one sand grain to one of its two neighbors, or by giving one sand grain to both of them (with probabilities detailed on the figure).
	Here we represent all the unstable configurations appearing while making topplings, before finally reaching a stable configuration (stable configurations are the ones in the blue rectangle).
\end{exmp}

\begin{figure}[ht]
	\centering
	\includegraphics{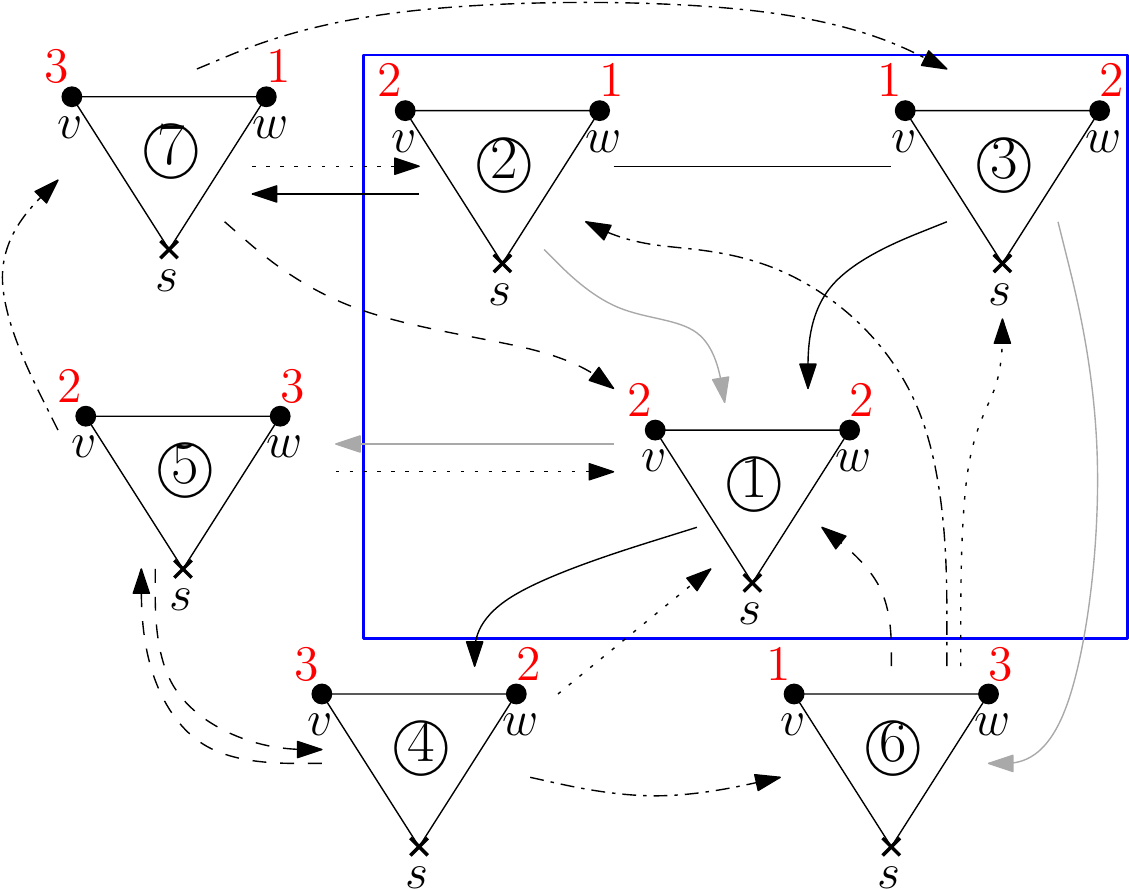}
	\caption{Solid black arrows correspond to transitions where a sand grain has just been added on $v$ (probability $a$), and solid gray arrows when a sand grain has been added on $w$ (probability $b=1-a$). Dotted arrows correspond to transitions where the unstable vertex gives a sand grain to the sink (which happens with probability $\alpha$), dashed arrows to transitions where it gives a sand grain to the other vertex (with probability $\beta$), and dashed-dotted arrows to transitions where it gives a sand grain to both (probability $\gamma=1-\alpha-\beta$). Stable configurations are the ones in the blue rectangle, the others are just intermediate unstable states.}
	\label{fig:ssmex}
\end{figure}


\subsubsection{Results to be investigated in the rest of the paper}

To ensure that the chain is well defined, we need the two following results :

\begin{prop}
	The toppling process almost surely ends.
\end{prop}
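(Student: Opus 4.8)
The plan is to reduce the statement to a property of a single chip-firing game (equivalently, a single "stabilization round" from a given unstable configuration with a fixed total number of grains), and then use a coupling with the deterministic ASM together with a Borel–Cantelli argument. First I would fix an unstable configuration $\eta$ with total mass $N=\sum_{v\in V}\eta(v)$ and observe that, because of the sink $s$ absorbs everything and never topples, each toppling of a vertex $v\in V$ moves at least one grain and there is a uniformly positive probability that a toppling sends a grain to the sink: more precisely, for every $v\in V$, the partial toppling of $v$ that includes an edge towards $s$ (or towards a vertex from which $s$ is reachable) has probability bounded below by some $\varepsilon>0$ depending only on the $\lambda_v$'s. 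The key point to extract is that whenever the configuration is unstable, within at most $|V|$ topplings there is probability at least $\varepsilon^{|V|}$ that a grain is absorbed into the sink, since one can always route a grain from an unstable vertex along a path to $s$.

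Next I would formalize the toppling process as a sequence of "rounds", where in each round we perform up to $K:=|V|$ topplings (or stop early if stability is reached). By the previous paragraph, each round independently has probability at least $\delta:=\varepsilon^{K}>0$ of strictly decreasing the total number of grains present on $V\cup\{s\}\setminus\{s\}$, i.e. of sending at least one grain to the sink. Since the total mass $N$ is finite and the sink only accumulates, at most $N$ successful rounds can occur. Hence the number of rounds before stabilization is stochastically dominated by a sum of $N$ independent geometric random variables with success parameter $\delta$, which is almost surely finite; by Borel–Cantelli (or simply because a geometric variable is a.s. finite and a finite sum of a.s. finite variables is a.s. finite), the process terminates with probability one.

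The main obstacle I expect is making the "within $|V|$ topplings a grain reaches the sink" claim fully rigorous in the stochastic setting, because the choice of which vertex topples next and the outcomes of earlier topplings are themselves random and may conspire to keep grains away from the sink. The clean way around this is to condition on the history up to the start of a round and to exhibit, for that history, one explicit deterministic sequence of toppling choices of length $\le |V|$ that delivers a grain to $s$ — such a sequence exists since the graph $G=(V\cup\{s\},E)$ is connected, so from any unstable $v$ there is a simple path $v=v_0,v_1,\dots,v_k=s$ with $k\le|V|$, and we topple $v_0,v_1,\dots,v_{k-1}$ in order, at each step using the partial toppling that includes the edge to the next vertex on the path (each such partial toppling has probability $\ge\varepsilon$, and one must check the intermediate vertices are indeed unstable when their turn comes, which holds because they just received a grain and we may, if needed, first route grains so as to guarantee this — or argue directly that along the path the relevant vertex has at least one grain, toppling a vertex that merely has a positive number of grains is allowed in the extended chain once we track partial topplings). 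The probability of this specific sequence is at least $\varepsilon^{|V|}$ regardless of the conditioning, which is exactly what the Borel–Cantelli step needs.
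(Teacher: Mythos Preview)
Your overall strategy (find a uniform lower bound $\delta$ on the probability that a block of topplings sends a grain to the sink, then dominate by a sum of geometrics) is reasonable, but the central step is not justified and, as written, does not go through. The claim you need is that \emph{whatever} the current unstable configuration, the next $|V|$ legal topplings have probability at least $\varepsilon^{|V|}$ of delivering a grain to $s$. Your justification is to exhibit a path $v_0,v_1,\dots,v_k=s$ and topple $v_0,\dots,v_{k-1}$ in order. The problem is that the stabilization procedure only allows toppling \emph{unstable} vertices: after $v_0$ sends a grain to $v_1$, there is no reason for $v_1$ to exceed its threshold $M_{v_1}$, so you may simply not be permitted to topple it. Your two suggested repairs do not work: ``first route grains so as to guarantee this'' is exactly the statement you are trying to prove, and ``toppling a vertex that merely has a positive number of grains is allowed in the extended chain'' is false in this model---step~2 of $\mathcal{RS}$ (and of $\mathcal{DS}$) requires the site to be unstable. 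So the lower bound $\delta=\varepsilon^{|V|}$ is unsupported, and without it the geometric domination collapses.

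The paper sidesteps this obstruction entirely by not attempting to push a single grain along a path. Instead it stratifies the vertex set: $G_1$ is the set of sites that can send a grain directly out of $[N]$, $G_2$ the set of remaining sites that can send a grain into $G_1$, and so on, and it proceeds by induction on the depth $D=\max\{n:G_n\neq\emptyset\}$. The key inductive observation is that, between two consecutive topplings of $G_1$-vertices, only $H_1=[N]\setminus G_1$ is active, and the restriction to $H_1$ (with $G_1$ playing the role of a sink) has depth $D-1$; meanwhile the number of $G_1$-topplings is a.s.\ finite because each has a uniformly positive chance of shedding a grain to the real sink and the total mass is bounded. This layered argument never needs an intermediate vertex on some path to be unstable, which is precisely where your routing breaks.

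If you want to salvage the Borel--Cantelli flavour of your argument, the natural fix is to run it on $G_1$-topplings only (each has conditional probability at least some $\varepsilon_1>0$ of decreasing the total mass) and then handle the topplings between them by induction on the smaller system $H_1$---but at that point you have essentially reproduced the paper's proof.
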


For this proposition to be true, we will need some additional hypothesis on the supports of the probability distributions $(\lambda_v,v\in V)$ on partial topplings.

\begin{prop}
	The order in which the vertices topple does not matter.
\end{prop}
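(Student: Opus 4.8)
The plan is to first \emph{separate the randomness from the dynamics}, in the spirit of the deck presentation of Diaconis and Fulton \cite{DF}. For each vertex $v\in V$ I would fix in advance an infinite sequence $\mathbf c_v=(c_v^1,c_v^2,\dots)$ of partial topplings of $v$, drawn i.i.d.\ from $\lambda_v$ --- the \emph{deck} of $v$ --- with the convention that the $k$-th time $v$ topples it uses the card $c_v^k$. Once every deck is fixed, the toppling process is deterministic given only the order in which unstable vertices are selected, so the proposition becomes the purely combinatorial assertion that, starting from a configuration $\eta$ with the decks fixed, any two \emph{maximal legal toppling sequences} (at each step the selected vertex must be unstable in the current configuration and uses the next unused card of its deck, and the sequence stops only at a stable configuration) end at the same stable configuration and, moreover, perform the same number of topplings at each vertex, hence consume exactly the same cards. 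I would stress the refinement about the number of topplings and the cards consumed: this is what makes ``the order does not matter'' meaningful at the level of the Markov chain (the law of the new state then depends only on $\eta$ and the $\lambda_v$), and it is what the later commutation and abelianity results will need.

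The core step is a \emph{diamond (local confluence) property}. Suppose $u$ and $v$ are both unstable in some configuration $\zeta$ reached along a legal sequence. A partial toppling of $u$ decreases $\zeta(u)$ and does not decrease $\zeta(w)$ for any $w\neq u$, by amounts that depend only on the card consumed. Hence, if $u\neq v$, then $v$ is still unstable after $u$ topples (and symmetrically), the $j$-th toppling of $u$ uses $c_u^j$ and the $j$-th toppling of $v$ uses $c_v^j$ regardless of the order in which the two are performed, and the induced updates of the grain counts are additive. Consequently toppling $u$ then $v$ gives the same configuration as toppling $v$ then $u$ and consumes the same two cards; and if $u=v$ the two one-step moves coincide. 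I would also record that in each such reconverging diamond the \emph{multiset} of vertices toppled on the two sides is the same --- this extra bookkeeping is what will later yield the invariance of the toppling counts.

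With local confluence in hand I would invoke termination: by the preceding proposition the toppling process almost surely ends, i.e.\ for almost every choice of decks there is no infinite legal toppling sequence from $\eta$, and since only finitely many vertices can be selected at each step, K\"onig's lemma makes the tree of legal toppling sequences finite. Newman's lemma (the diamond lemma: a terminating, locally confluent rewriting system is confluent) then shows that all maximal legal sequences reach the same normal form, which is the unique stable configuration obtainable from $\eta$; this is precisely the statement that the order of topplings does not matter. Finally, propagating the strengthened diamond property --- which preserves the multiset of toppled vertices --- through the same confluence argument shows that this multiset, and in particular the number $N_v$ of topplings of each $v$, is the same for every maximal legal sequence, so each such sequence consumes exactly the cards $c_v^1,\dots,c_v^{N_v}$ for every $v$.

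The step I expect to be the main obstacle is not any single inequality but the careful set-up of the deck formalism: one has to verify that ``the $k$-th toppling of $v$ uses $c_v^k$'' is genuinely well defined independently of how topplings of other vertices are interleaved, and that the monotonicity of a partial toppling on the remaining coordinates is exactly what keeps a vertex unstable across a swap and makes the diamond close with the correct cards. Once that is done cleanly, termination is supplied by the previous proposition and the remainder is the standard Newman-lemma argument, together with the routine check that the multiset of topplings is a diamond invariant.
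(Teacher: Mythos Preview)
Your approach is correct and shares the paper's starting point: the Diaconis--Fulton deck presentation, with explicit counters so that the toppling operators $T_v$ commute as maps on (configuration, counter) pairs. Where you diverge is in the confluence step. You prove the local diamond property (if $u\neq v$ are both unstable then toppling $u$ keeps $v$ unstable, and $T_u\circ T_v=T_v\circ T_u$ with the same cards consumed), then use almost-sure termination together with K\"onig's lemma to obtain strong normalization for almost every deck, and finish via Newman's lemma, tracking the multiset of toppled vertices as a diamond invariant. The paper instead proves a direct \emph{counter-maximality} lemma: if some stabilizing sequence reaches counters $d$, then any legal sequence from the same start has counters $d'\leq d$ componentwise; the argument is a short monotonicity computation showing that whenever a further legal toppling is possible at site $\ell$, the inequality $\zeta_\ell>\xi_\ell$ forces $d'_\ell<d_\ell$. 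Two stabilizing sequences then dominate each other, hence have identical counters and identical final configurations. The paper's route is self-contained and sidesteps the abstract-rewriting machinery; yours is more modular and makes the link to confluence theory explicit, but it leans on termination holding for \emph{every} legal sequence on a full-measure set of decks --- a quantifier order that is true here (the decks are fixed first and the finitely-branching tree argument then applies deckwise) but deserves one explicit sentence rather than a bare pointer to the preceding proposition.
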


Those two results will be needed to prove Conjecture \ref{Conj}.


\section{Model description.}
\label{sec:ssm}

We now give a more formal definition for the SSM.


\subsection{Notations and definitions.}
\label{sec:def}

Let $N$ be a nonnegative integer. The set $[N]:=\{1,\ldots,N\}$ represents the \textit{sites} (or the vertices, except the sink). A sandpile configuration on $[N]$ is a vector $\eta=(\eta_k,k\in[N])\in(\mathbb{N}-\{0\})^{[N]}$ : for $v$ in $[N]$, $\eta_v$ represents the number of sand grains on the site $v$.

For $v$ in $[N]$, we define $$\mathcal{S}_v=\{\nu\in\mathbb{Z}^{[N]}|\nu_v<0\text{, }\nu_{w}\geq 0\text{ for }w\neq v\text{, and }\sum_{i=1}^n \nu_i \leq 0\}$$
the set of possible topplings of $v$. If a toppling $\nu\in\mathcal{S}_v$ occurs, the state $\eta$ becomes $\eta+\nu $ : $v$ loses $-\nu_v$ sand grains, and any other site $w\neq v$ receives $\nu_{w}$ sand grains from $v$. If $\sum_{i=1}^n \nu_i <0$, it means some sand grains were send to the sink (Here the sink is not represented. Sending sand grains to is equivalent to losing sand grains).

For $v\in V$, let $\lambda_v$ be a probability distribution with finite support on $\mathcal{S}_v$ that describes the law of the random topplings of $v$.
We define $M_v=\max\{-\nu_v,\nu\in\mathcal{S}_v|\lambda_v(\nu)\neq 0\}$ to be the \textit{threshold} of site $v$, i.e.\  the highest number of sand grains $v$ can lose during a toppling.
If $\nu$ is a configuration, the site $v$ is said to be \textit{stable} if $\nu_v\leq M_v$, otherwise it is said to be \textit{unstable}.
The configuration $\nu$ is said to be stable if and only if all its sites are stable.


For $v\in[N]$, we also define $\delta^v$ to be the vector with a $1$ in $v^{th}$ position and zeros everywhere else : $\delta^v_i=\delta_{i,v}$ (where $\delta_{i,v}$ is the Kronecker symbol).

Finally, we need a probability distribution $\mu$ on $[N]$ which governs the way sand grains are added on the set of sites.


\subsection{State space and transitions}
\label{sec:stab}

The state space for our SSM is the set $St=[M_1]\times[M_2]\times\cdots\times[M_N]$ of stable configurations.

To properly describe the transitions, we need to define the random stabilization operator $\mathcal{RS}$ :
let $\eta$ be a sandpile configuration, we construct $\mathcal{RS}(\eta)$ by the following procedure :
\begin{enumerate}
\item If $\eta$ is stable, return $\mathcal{RS}(\eta)=\eta$ and exit
  the procedure.
\item Pick an arbitrary unstable site $v\in [N]$.
\item Draw a random toppling $D_v \in \mathcal{S}_v$ according to the
  probability distribution $\lambda_v$, independently from the past.
\item Replace $\eta$ by $\eta + D_v$.
\item Go to step 1.
\end{enumerate}
As we shall see, this procedure terminates almost surely under some
natural hypotheses on the $\lambda_v$'s, and the law of the resulting
$\mathcal{RS}(\eta)$ does not depend on which unstable site we pick at
each execution of the step 2.

Now the \emph{Stochastic Sandpile Model} is the Markov chain in which,
at each time step, we add a sand grain at a random site drawn
according to the probability distribution $\mu$, and then perform
random stabilization on the resulting configuration. For $\eta,\eta'$
two states, then the transition probability from $\eta$ to $\eta'$
reads
\begin{equation}
\label{eq:trans}
P_{\eta,\eta'}=\sum_{k=1}^N\mu(k)\mathbb{P}(\mathcal{RS}(\eta+\delta^k)=\eta').
\end{equation}

We now give a result about the finiteness of the stabilization process :

\begin{lem}
	\label{lem:finite}
	The two following assertions are equivalent :
	\begin{enumerate}
		\item[(i)] The random stabilization process is always almost surely finite regardless of $\mu$.
		\item[(ii)] For all nonempty subset $H$ of $[N]$, there exists a $\ell$ in $H$ such that 
		\begin{equation}
			\label{eq:fincon}
			\lambda_{\ell}(\{\nu\in\mathcal{S}_{\ell}|\sum_{k\in H} \nu_k<0\})>0.
		\end{equation}
	\end{enumerate}
\end{lem}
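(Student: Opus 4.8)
The natural strategy is to prove the two implications separately, handling the easy direction ((i) $\Rightarrow$ (ii)) by contraposition and the substantive direction ((ii) $\Rightarrow$ (i)) by exhibiting a supermartingale or a potential function that forces termination.

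Let me think about the structure. The condition (ii) says: for every nonempty $H \subseteq [N]$, some site $\ell \in H$ has positive probability of a toppling that strictly decreases the total number of grains on $H$ (i.e., sends at least one grain out of $H$). Equivalently, there is NO nonempty $H$ on which the process is "closed" — where every possible toppling of every site in $H$ keeps all grains inside $H$.

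For (i) $\Rightarrow$ (ii), I would argue the contrapositive: suppose (ii) fails, so there is a nonempty $H$ such that for every $\ell \in H$, every toppling $\nu$ in the support of $\lambda_\ell$ satisfies $\sum_{k \in H} \nu_k \ge 0$ — but since $\sum_{i=1}^N \nu_i \le 0$ and $\nu_w \ge 0$ off $v$, the toppling of a site $\ell\in H$ moves grains only within $H$ (none leave $H$, none go to the sink from within... well, one has to be slightly careful, but the total on $H$ never decreases and in fact, combined with $\sum_i \nu_i\le 0$, one deduces nothing leaves $H$). Now load enough grains on sites of $H$ (via a suitable $\mu$, or just by choosing $\mu$ supported on $H$ and noting the grains never leave $H$): the total grain count on $H$ is nondecreasing under topplings restricted to $H$, so if it starts strictly above $\sum_{\ell\in H} M_\ell$ there is always an unstable site in $H$, and picking always to topple within $H$ gives an infinite run with positive probability, contradicting (i). I would need to phrase this so that "always almost surely finite regardless of $\mu$" is cleanly negated; the cleanest route is: find a configuration from which, with positive probability, stabilization never ends — and reach it by adding grains on $H$ under some $\mu$.

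For (ii) $\Rightarrow$ (i), the plan is to build a nonnegative integer-valued potential $\Phi(\eta)$ that (a) is bounded below, and (b) has positive probability — bounded below by some $\varepsilon > 0$ uniform over the (finitely many relevant) configurations reachable — of strictly decreasing at each step, so that the process a.s. reaches $\Phi = 0$, which we arrange to mean "stable." The simplest candidate is $\Phi(\eta) = \sum_{k=1}^N \eta_k$, the total grain count: topplings never increase it, and I want to show that from any unstable configuration, within a bounded number of topplings (bounded in terms of $N$ and the $M_v$'s only), there is a positive-probability sequence of toppling choices that strictly decreases the total. Here is where condition (ii) enters: let $H$ be the set of currently unstable sites — or better, iteratively grow a set and apply (ii) — to find some $\ell$ whose toppling sends a grain out of a well-chosen $H$; one has to chain this so that a grain actually escapes to the sink. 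The key sub-lemma is: if $\eta$ is unstable, there is a finite sequence of topplings, each of positive probability, after which the total grain count has dropped — because repeatedly applying (ii) to the set of sites still holding "excess" eventually pushes a grain to the sink. Since grain count is a nonnegative integer and each "epoch" of bounded length decreases it with probability at least some fixed $\varepsilon > 0$ (uniformity coming from finiteness of supports and the fact that only configurations with total grain count at most the initial one are ever visited), Borel–Cantelli / a standard geometric-trials argument gives a.s. termination.

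The main obstacle I anticipate is the sub-lemma in the (ii) $\Rightarrow$ (i) direction: turning the "local" escape condition (ii), which only guarantees a grain leaves some subset $H$, into a "global" statement that a grain reaches the sink within a bounded number of steps with positive probability. The issue is that a grain leaving $H$ might just move to another non-sink site, so one must iterate (ii) along a growing family of subsets — a flow/reachability argument showing that if no finite sequence of positive-probability topplings ever sent a grain to the sink, one could extract a nonempty closed set $H$ violating (ii). Making the length of this sequence (and hence the probability lower bound $\varepsilon$) depend only on $N$ and $\max_v M_v$, not on the current configuration, is the delicate point; I would handle it by noting that only finitely many configurations with total grain count below the starting value are reachable, so a uniform $\varepsilon$ exists by taking a minimum over a finite set.
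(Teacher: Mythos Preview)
Your contraposition for (i)$\Rightarrow$(ii) matches the paper's argument exactly.

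For (ii)$\Rightarrow$(i) the paper takes a more direct route than your potential-function-plus-Borel--Cantelli plan. It sets up a layered decomposition: let $G_1$ be the set of sites that can (with positive probability) send a grain to the sink, $H_1=[N]\setminus G_1$; then $G_2$ the sites in $H_1$ that can send a grain out of $H_1$, $H_2=H_1\setminus G_2$, and so on. Condition (ii) forces $[N]=G_1\cup\cdots\cup G_D$ for some finite $D$, and the paper inducts on this \emph{depth} $D$. The base case $D=1$ is your total-grain-count argument; the inductive step observes that topplings of $G_1$-sites are a.s.\ finitely many (each risks losing a grain to the sink, and the total is bounded below), while between consecutive $G_1$-topplings the process runs only on $H_1$, which has depth $D-1$ and hence terminates by induction.

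The point your plan does not adequately handle is the adversarial site choice. In the stabilization procedure the unstable site to topple at each step is chosen \emph{arbitrarily}; only the toppling outcome is random. Your sub-lemma asserts that ``there is a positive-probability sequence of toppling choices that strictly decreases the total,'' but you need this to hold whatever unstable site the adversary selects at each step, and the adversary may systematically avoid toppling any $G_1$-site as long as some $H_1$-site remains unstable---so exhibiting one good sequence is not enough. (Your parenthetical ``bounded in terms of $N$ and the $M_v$'s only'' is also false: the delay before a $G_1$-site must be toppled can scale with the current grain total.) The ``finitely many reachable configurations, take the minimum $\varepsilon$'' patch does not close the gap either, because establishing $\varepsilon(\eta)>0$ against \emph{every} strategy is exactly the substantive claim. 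Your growing-subsets/reachability idea is indeed the germ of the paper's layered decomposition, but to make it work you must organize it as an induction on depth as the paper does (or, alternatively, run a recurrent-class argument: if some strategy never terminated, the set $H^\ast$ of sites toppled infinitely often on a recurrent class would be closed under every toppling in its support and would violate (ii)).
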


Note that we can rephrase condition $\eqref{eq:fincon}$ as follows :
For all nonempty subset $H$ of $[N]$, $H$ contains a vertex which has a nonzero probability to send sand grains outside of $H$ when toppling.

\begin{proof}
	
	Let us first prove that (i) implies (ii). Here we proceed by contraposition.
	Let us assume that there exists a nonempty subset $H\subset [N]$ from which sand grains can not escape. If the distribution $\mu$ allows sand grains to be added on some sites in $H$, then the total number of sand grains on $H$ will eventually become greater than $\sum_{k\in H} M_k$, ensuring that there is always at least one unstable site in $H$ and that the toppling process never ends.\\
	
	Let us now prove that (ii) implies (i).
	Let us denote $G_0=\emptyset$ and $H_0=[N]$.
	We recursively define $G_n,n\in \mathbb{N}$ and $H_n,n\in \mathbb{N}$ by
	$$G_{n+1}=\{\ell\in H_n|\lambda_{\ell}(\{\nu\in\mathcal{S}_{\ell}|\sum_{k\in H_n}\nu_k<0\})>0\},$$
	and $$H_{n+1}=H_n\backslash G_{n+1}.$$
	$G_1$ is the set of sites in $H_0$ having a nonzero probability to send sand grains to the sink (outside of $[N]$), and $H_1$ is its complementary.
	Similarly, $G_{n+1}$ is the set of sites in $H_n$ having a nonzero probability to send sand grains to $G_n$.
	The condition (ii) ensures that $\cup_{n\in\mathbb{N}} G_n=[N]$.
	We define the \textit{depth} of the Markov chain to be $D=\max\{n\in\mathbb{N}|G_n\neq \emptyset\}$.
	We will now proceed by induction on D.
	
	If $D=1$, it means that every site has a nonzero probability to send grains to the sink, and one easily checks that the stabilization occurs almost surely in that case, using the fact that the total number of sand grains on the graph will decrease during the stabilization process.
	
	Assume that the property is true for any case where $D\leq n$, which means that if $D\leq n$, the stabilization process always eventually ends. Consider the case $D=n+1$. Let us consider an unstable configuration. Each site in $G_1$ has a positive probability to send sand grains to the sink, and the total number of sand grains on the graph is finite, so the number of topplings in $G_1$ is almost surely finite. Furthermore, between two topplings in $G_1$, the number of topplings in $H_1$ is also almost surely finite (by induction hypothesis, since the process restricted to $H_1$ is of depth $n$). Therefore the number of topplings in the entire stabilization process is almost surely finite.
\end{proof}

From now on, we consider that the condition \eqref{eq:fincon} is satisfied.
This condition was used by Diaconis and Fulton in \cite{DF}, when they defined their \textit{growth model}.

We will now need to show that the result of the stabilization process does not depend on the choices we make when picking unstable sites to topple.

\subsection{The Abelian property}

We now prove the fact that the law of $\mathcal{RS}(\eta)$ is
independent of which unstable site is toppled at each step. This is
done by a coupling argument or ``deck of cards'' picture as in
\cite{DF}.

Let $(D_v^{(i)},i\geq 1,v\in [N])$ be a collection of independent random variables such that, for any $v\in[N]$ and $i\geq 1$, $D_v^{(i)}$ takes its value in $\mathcal{S}_v$ according to the probability distribution $\lambda_v$.
It is convenient to think of each site as being bound to an infinite deck of cards, where each card indicates a possible toppling at the site at hand. Each time we want to topple a vertex $v\in V$, we draw the top card of its deck, make the toppling according to the instructions on this card, and then throw the card away. For $i\geq 1$ and $v\in [N]$, $D_v^{(i)}$ represents the card that was originally at the $i^{th}$ position of the deck bound to the vertex $v$.

Given a realization of the $D_v^{(i)}$'s, the topplings are
deterministic. More precisely, each sandpile configuration $\eta$
shall be supplemented with a collection of \emph{counters} $c \in
\mathbb{N}^{[N]}$ keeping track of how many cards have been drawn
at each site. For $v \in [N]$, we define the toppling function
\begin{equation*}
  \begin{split}
    T_v: &\mathbb{Z}^N \times \mathbb{N}^N \to \mathbb{Z}^N \times \mathbb{N}^N \\
    &(\eta,c) \mapsto (\eta+D_v^{(c_v+1)},c+\delta^v)
  \end{split}
\end{equation*}
We perform the toppling $D_v^{(c_v+1)}$ at $v$ and increment the counter $c_v$ by one.

As one can easily check, for all $v,w\in[N]$, $T_v\circ T_w=T_w\circ T_v$.

We now define the determinized stabilization operator $\mathcal{DS}$:
for any sandpile configuration with counters $(\eta,c)$:
\begin{enumerate}
\item If $\eta$ is stable, return $\mathcal{DS}(\eta,c)=(\eta,c)$ and exit
  the procedure.
\item Pick an arbitrary unstable site $v\in [N]$.
\item Replace $(\eta,c)$ by $T_v(\eta,c)$.
\item Go to step 1.
\end{enumerate}
It looks like $\mathcal{DS}(\eta,c)$ depends on the sequence of
vertices $v_1,v_2,\ldots$ that we pick at each execution of the step
2. But it turns out that, if there exists one such sequence for which
the procedure terminates, then the procedure also terminates for any
other sequence and the resulting configuration is identical. To prove
this, we need to introduce some further terminology.





\begin{figure}[ht]
	\centering
	\includegraphics[width=1\linewidth]{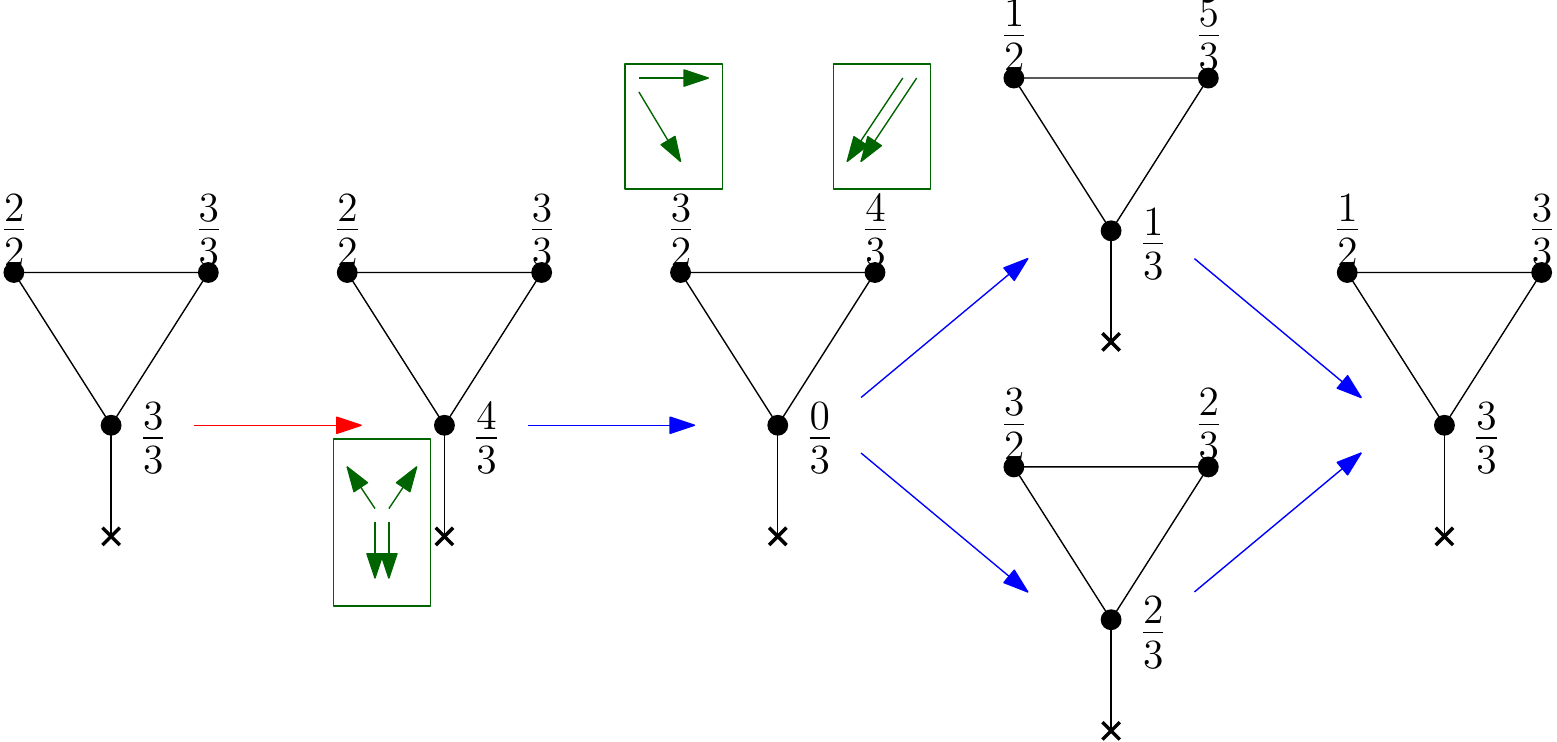}
	\caption{A detailed transition for the SSM on a triangle linked to the sink. The red arrow corresponds to the addition of a sand grain (on the lowest site here), and blue arrows to topplings. The cards on top of the decks have been represented in green. The above numbers near each site indicate how many sand grains are on this site, while the below number is the threshold of the site. Note that the resulting state does not depend on the order in which the vertices topple.}
	\label{fig:ssmstab}
\end{figure}

Given $(\eta,c)$ a sandpile configuration with counters, a sequence of
sites $(v_1,\ldots,v_n)$ is said to be \emph{legal} if $v_i$ is an unstable
vertex in $(T_{v_{i-1}} \circ T_{v_{i-2}} \circ \cdots
T_{v_1})(\eta,c)$ for all $i$ (in particular, $v_1$ must be an
unstable site in $\eta$). In other words, a legal sequence is a
possible choice of sites to topple in the first $n$ iterations of the
procedure $\mathcal{DS}(\eta,c)$. A legal sequence is said
\emph{stabilizing} if $(T_{v_{n}} \circ \cdots \circ
T_{v_1})(\eta,c)$ is stable.

 
\begin{lem}
  Let $(\eta,c)$ be a sandpile configuration with counters which
  admits a a stabilizing sequence $(v_1,\ldots,v_n)$, and let
  $(\xi,d)= (T_{v_{n}} \circ \cdots \circ T_{v_1})(\eta,c)$. Then the
  final counters $d$ are maximal in the sense that, for any legal
  sequence $(w_1,\ldots,w_p)$ for $(\eta,c)$, if $(\xi',d')=(T_{w_{p}}
  \circ \cdots \circ T_{w_1})(\eta,c)$ then we have $d'_v \leq d_v$
  for all $v \in [N]$.
 \end{lem}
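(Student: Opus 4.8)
The plan is to reduce the statement to a purely combinatorial fact about the multisets of toppled vertices. Since each application of $T_v$ increments only the counter at $v$, after the stabilizing sequence we have $d_v = c_v + m_v$ with $m_v := \#\{i\le n : v_i=v\}$, and likewise $d'_v = c_v + m'_v$ with $m'_v := \#\{j\le p: w_j=v\}$. Thus it suffices to prove that $m'_v\le m_v$ for all $v$, i.e.\ that the multiset $\{w_1,\dots,w_p\}$ is contained, with multiplicities, in $\{v_1,\dots,v_n\}$. I will prove this by induction on the length $n$ of the stabilizing sequence; the base case $n=0$ is immediate, since then $\eta$ is already stable and there is no legal move, so $p=0$ as well.

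The one structural input is a monotonicity observation: by definition of $\mathcal{S}_u$, every toppling vector is nonnegative on every coordinate other than $u$, so applying $T_u$ can only (weakly) increase the grain count at any vertex $v\neq u$. Consequently, once a vertex is unstable it remains unstable until it is itself toppled. In particular, if $p\ge 1$ then $w_1$ is unstable in $\eta$, and since $(v_1,\dots,v_n)$ reaches a stable configuration, the vertex $w_1$ must occur in $(v_1,\dots,v_n)$; let $k$ be the index of its first occurrence. I then claim that moving this occurrence to the front yields another legal stabilizing sequence $(v_k,v_1,\dots,v_{k-1},v_{k+1},\dots,v_n)$. Indeed, none of $v_1,\dots,v_{k-1}$ equals $v_k=w_1$ by minimality of $k$, so by the monotonicity observation prepending $T_{v_k}$ only adds grains at those vertices and leaves each of them unstable at the moment it is toppled; moreover, because the $T$'s pairwise commute (as already noted in the text), after toppling $v_k,v_1,\dots,v_{k-1}$ the configuration-with-counters is exactly the one reached by the original sequence after its first $k$ steps, from which the tail $v_{k+1},\dots,v_n$ is legal and stabilizing verbatim. (The minimality of $k$ is what makes the card counters line up, but once commutativity of the $T_v$ is invoked this bookkeeping is automatic.)

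Finally, stripping the common first letter $w_1=v_k$ from both sequences, $(w_2,\dots,w_p)$ is a legal sequence for $T_{w_1}(\eta,c)$ and $(v_1,\dots,v_{k-1},v_{k+1},\dots,v_n)$ is a stabilizing sequence for $T_{w_1}(\eta,c)$ of length $n-1$. The induction hypothesis then gives that $(w_2,\dots,w_p)$ uses every vertex at most as often as $(v_1,\dots,v_{k-1},v_{k+1},\dots,v_n)$ does; adding back the single occurrence of $w_1$ on each side yields $m'_v\le m_v$ for all $v$, hence $d'_v\le d_v$. I expect the only delicate point to be the verification that the reordered sequence $(v_k,v_1,\dots)$ is legal, i.e.\ that each vertex is still unstable at the moment it is toppled; this is precisely where the monotonicity of topplings and the commutation $T_v\circ T_w=T_w\circ T_v$ are used together.
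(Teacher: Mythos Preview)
Your argument is correct. It is the classical ``strong confluence'' or reordering proof from chip-firing theory (as in Bj\"orner--Lov\'asz--Shor): induct on the length of the stabilizing sequence, use monotonicity to find $w_1$ somewhere in it, commute that occurrence to the front, and peel it off. The one point that deserves care is exactly the one you flag, namely that after pulling $v_k$ to the front each $v_i$ with $i<k$ is still unstable at its turn; your observation that none of $v_1,\dots,v_{k-1}$ equals $v_k$, together with $(D_{v_k}^{(c_{v_k}+1)})_{v_i}\ge 0$ and the fact that the counters at the $v_i$'s (hence the cards drawn) are unchanged, handles this cleanly.

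The paper proceeds differently. Rather than induct on $n$ and reorder, it inducts on the length $p$ of the legal sequence and argues algebraically: assuming $d'\le d$ componentwise and that a further legal toppling is available at some site $\ell$, it computes $\zeta_\ell-\xi_\ell>0$, rewrites this as $\sum_{v}\sum_{i=d'_v+1}^{d_v}(D_v^{(i)})_\ell<0$, and isolates the $v=\ell$ contribution to force $d'_\ell<d_\ell$. This avoids any reordering of the stabilizing sequence and does not explicitly invoke the commutation $T_vT_w=T_wT_v$; instead it exploits directly the sign structure of the toppling vectors. Your approach is perhaps more transparently combinatorial and makes the ``abelian'' nature of the model visible in the proof itself, while the paper's approach is shorter and closer in spirit to Redig's treatment of the deterministic ASM.
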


 This lemma and its proof are inspired from \cite{Redig}.
 
 \begin{proof}
 	Let $(\xi,d)=T_{v_n}\circ T_{v_{n-1}}\circ\ldots\circ T_{v_1}(\eta,c)$ be the stable configuration we obtained. We have
 	$$(\xi,d)=(\eta+\sum_{v=1}^N \sum_{i=c_v+1}^{d_v} D_v^{(i)},d).$$
 	Suppose that the resulting state of a sequence of legal topplings starting from $(\eta,c)$ has counters $d'_v\leq d_v$ (this is always possible since we can choose $d'_v=c_v$ for all $v$), and for a site $\ell\in V$ an extra legal toppling can be performed. We put $$(\zeta,d')=(\eta+\sum_{v=1}^N \sum_{i=c_v+1}^{d'_v} D_v^{(i)},d').$$
 	Since an extra toppling can be performed at site $\ell$, we have $\zeta_{\ell}-\xi_{\ell}>0$ . This can be rewritten as 
 	$$\sum_{v=1}^N \sum_{i=d'_v+1}^{d_v} (D_v^{(i)})_{\ell}<0,$$
 	which gives us 
 	$$\sum_{i=d'_{\ell}+1}^{d_{\ell}} (D_{\ell}^{(i)})_{\ell}<-\sum_{v\neq \ell}\sum_{i=d'_v+1}^{d_v}(D_v^{(i)})_{\ell}\leq 0.$$
 	
 	The first sum being nonzero, it necessarily has at least one term, ensuring that $d'_{\ell}<d_{\ell}$ and thus $d'_{\ell}+1\leq d_{\ell}$ still holds once we make another toppling on $\ell$.
 \end{proof}
 
This result, along with Lemma \ref{lem:finite} proves that the stabilization operator is well defined.


\subsection{Some specific cases}
\label{sec:spec}

\begin{enumerate}
	\item If for all $i\geq 1$, for all $k\in[N]$, we always have $(D_k^{(i)})_k=-1$, this corresponds to the case where a toppling vertex will always give exactly one sand grain to either an other vertex or the sink. It is easy to see that in this case, the only possible recurrent state is the one with exactly one sand grain on every site : Each time we add a sand grain, it will move from site to site until it falls into the sink.
	
	\item For all $k,\ell$ in $[N]$, let $d_{k,\ell}=d_{\ell,k}$ and be fixed positive integers.
	Let $p\in]0,1]$ be a fixed real number.
	
	For $i\geq 1$, $k,\ell$ in $[N]$ and $j$ in $\{1,\ldots,d_{k,\ell}\}$, $B^{(i)}_{k,\ell}(j)$ are independent Bernoulli random variables with parameter p.
	
	The case where for all $i\geq 1$ and for all $k\neq \ell$, we have
	$$(D_k^{(i)})_{\ell}=\sum_{j=1}^{d_{k,\ell}}B^{(i)}_{k,\ell}(j)$$
	and
	$$(D_k^{(i)})_k=-\Big(\sum_{m\neq k}(D_k^{(i)})_m+\sum_{j=1}^{d_{k,k}}B^{(i)}_{k,k}(j)\Big)$$
	corresponds to the Stochastic Sandpile Model on a graph defined by Chan, Marckert and Selig in \cite{CMS} : indeed, for $k\neq\ell$, $d_{k,\ell}$ is the number of edges between the vertices $k$ and $\ell$, and $d_{k,k}$ is the number of edges between $k$ and the sink.
	
	\item The previous case with $p$ fixed to $1$ corresponds to the abelian sandpile model on a graph.

\end{enumerate}


\section{Around the stationary distribution}
\label{sec:stat}

We now show how to precisely compute the stationary distribution, and we then show Conjecture \ref{Conj}.

\subsection{The transition matrix}
\label{sec:matrix}

We here quickly explain how to compute the transition matrix on simple examples. We follow the same procedure as in \cite[Section 5.7.1.1]{Selig}, by defining an extended Markov chain.

We first extend the state space : the new state space is the set $St'=(\mathbb{N}-\{0\})^{[N]}$ of sandpile configurations. 

The transitions are defined as follows :
Let $\eta$, $\eta'$ be two sandpile configurations, the transition probability from $\eta$ to $\eta'$ reads :

\begin{equation}
\label{eq:enrtrans}
\tilde{P}_{\eta,\eta'}=
\begin{cases}
\mu(k)\text{ if $\eta$ is stable and $\eta'=\eta+\delta^k$ for some $k\in[N]$,}\\
\mathbb{P}(\overline{T}_k(\eta)=\eta')\text{ if $\eta$ is unstable and $k$ is the smallest unstable site in $\eta$,}\\
0\text{ otherwise.}
\end{cases}
\end{equation}

This Markov chain is much simpler, because only one operation (be it a toppling or the addition of a sand grain) will happen at each transition.

The transition matrix for this Markov chain reads $\tilde{P}=\begin{pmatrix}
A&B\\C&D
\end{pmatrix}$
where $A=(\tilde{P}_{\eta,\eta'}|\eta,\eta'\in St)$, $B=(\tilde{P}_{\eta,\eta'}|(\eta,\eta')\in St\times St')$,
$C=(\tilde{P}_{\eta,\eta'}|(\eta,\eta')\in St'\times St)$ and $D=(\tilde{P}_{\eta,\eta'}|\eta,\eta'\in St')$.

We are now able to describe the transition matrix $P$ for the SSM. Indeed, for all $\eta,\eta'\in St$, we have
$$P_{\eta,\eta'}=\tilde{P}_{\eta,\eta'}+\sum_{\eta_1,\eta_2,\ldots,\eta_k\in St'}\tilde{P}_{\eta,\eta_1}\tilde{P}_{\eta_1,\eta_2}\ldots\tilde{P}_{\eta_{k-1}},\eta_k\tilde{P}_{\eta_k,\eta'}$$
which can be written as : 
$$P=A+B\Big(\sum_{n\geq 0} D^n\Big)C,$$
or more conveniently
\begin{equation}
\label{eq:relat}
P=A+B(I-D)^{-1}C.
\end{equation}

\begin{exmp}
	The transition matrix of the extended Markov chain represented in Figure \ref{fig:ssmex} in the basis $\{\text{\textcircled{1}},\ldots,\text{\textcircled{7}}\}$ reads :
	\begin{equation}
	\begin{pmatrix}
	0&0&0&a&b&0&0\\
	b&0&0&0&0&0&a\\
	a&0&0&0&0&b&0\\
	\alpha&0&0&0&\beta&\gamma&0\\
	\alpha&0&0&\beta&0&0&\gamma\\
	\beta&\gamma&\alpha&0&0&0&0\\
	\beta&\alpha&\gamma&0&0&0&0
	\end{pmatrix}
	\end{equation}
And if we write
$A=\begin{pmatrix}
0&0&0\\
b&0&0\\ 
a&0&0
\end{pmatrix}$,
$B=\begin{pmatrix}
a&b&0&0\\
0&0&0&a\\
0&0&b&0
\end{pmatrix}$,
$C=\begin{pmatrix}
\alpha&0&0\\
\alpha&0&0\\
\beta&\gamma&\alpha\\
\beta&\alpha&\gamma
\end{pmatrix}$
and
$D=\begin{pmatrix}
0&\beta&\gamma&0\\
\beta&0&0&\gamma\\
0&0&0&0\\
0&0&0&0
\end{pmatrix}$
and by subsiding those matrices in Equation \eqref{eq:relat}, we find the transition matrix for the SSM in the basis $\{\text{\textcircled{1},\textcircled{2},\textcircled{3}}\}$ :

$$\begin{pmatrix}
\frac{\alpha+\beta\gamma}{1-\beta}&\frac{\gamma(a(\gamma+\alpha\beta)+b(\alpha+\beta\gamma))}{1-\beta^2}&\frac{\gamma(a(\alpha+\beta\gamma)+b(\gamma+\alpha\beta))}{1-\beta^2}\\
b+a\beta&a\alpha&a\gamma\\
a+b\beta&b\gamma&b\alpha
\end{pmatrix}.$$
Note that $(\frac{\beta+1}{\gamma},1,1)$ is a left eigenvector for that matrix.
The fact that this vector does not depend on $(a,b)$ is quite remarkable, and we will now show that this fact is true in general.
\end{exmp}

\subsection{Commuting matrices and stationary distribution.}
\label{sec:thm}

We have explained how to compute the transition matrix $P$ for the SSM. This matrix of course depends on the distribution $\mu$ governing the additions of sand grains on the sites, but our aim is to show that the stationary distribution does not.
For $k$ in $[N]$, we will note $P^{(k)}$ the matrix in $St$ with coefficients reading :
\begin{equation}
\label{eq:spe}
P^{(k)}_{\eta,\eta'}=\mathbb{P}(\mathcal{RS}(\eta+\delta^k)=\eta')
\end{equation}
The rows of this matrix give the probability to go from one state to another by adding a sand grain on $k$ and then making topplings until we reach a stable state.

\begin{prop}
	For all $k,\ell\in[N]$, we have $P^{(k)}P^{(\ell)}=P^{(\ell)}P^{(k)}$.
\end{prop}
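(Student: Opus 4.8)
The plan is to prove this by a coupling argument using the deck-of-cards picture already set up in the previous subsection. The key insight is that $P^{(k)}P^{(\ell)}_{\eta,\eta'}$ is the probability, over a realization of all the decks $(D_v^{(i)})$ plus independent grain placements, that: starting from $(\eta, \mathbf{0})$ we add a grain on $k$, stabilize via $\mathcal{DS}$, then add a grain on $\ell$, stabilize again, and end at $\eta'$. I want to show this equals the analogous quantity with $k$ and $\ell$ swapped, by showing they compute the same thing on every realization of the decks. Concretely, fix a realization of $(D_v^{(i)})$. I would show that $\mathcal{DS}$ applied to $(\eta+\delta^k, \mathbf{0})$, then adding $\delta^\ell$, then applying $\mathcal{DS}$ again, yields the same final $\eta'$ (and in fact the same final counters) as the $k \leftrightarrow \ell$-swapped procedure. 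Since the decks and grain placements are drawn from product measures and the statistics we care about depend only on the realization, equality pointwise on realizations gives equality of the matrix products.

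**The core step** is the pointwise/deterministic abelian claim: for a fixed deck realization, the composite operator ``$\mathcal{DS} \circ (\text{add } \delta^\ell) \circ \mathcal{DS} \circ (\text{add } \delta^k)$'' applied to $(\eta,\mathbf{0})$ equals the same with $k,\ell$ exchanged. To see this, first note that adding $\delta^k$ then $\delta^\ell$ (in either order) to $\eta$ gives the same configuration $\eta + \delta^k + \delta^\ell$. So it suffices to show that stabilizing $\eta+\delta^k$, then adding $\delta^\ell$, then stabilizing, reaches the same $(\xi, d)$ as stabilizing $\eta + \delta^k + \delta^\ell$ directly — i.e., that an intermediate partial stabilization does not change the outcome. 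This follows from the maximality lemma (the unnumbered Lemma about final counters being maximal): the sequence of topplings used in ``stabilize $\eta+\delta^k$, then stabilize $\eta+\delta^k+\delta^\ell$'' is a legal sequence for $(\eta+\delta^k+\delta^\ell, \mathbf 0)$, and conversely any stabilizing sequence for $(\eta+\delta^k+\delta^\ell,\mathbf 0)$ is reachable; by the lemma both yield the componentwise-maximal counters, hence the same counters $d$, hence the same $\xi = \eta + \delta^k + \delta^\ell + \sum_v \sum_{i=1}^{d_v} D_v^{(i)}$. The same argument runs with $k,\ell$ swapped, and both land on the stabilization of $\eta + \delta^k + \delta^\ell$. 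This gives the deterministic identity.

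**From the deterministic identity to the probabilistic statement**: I would phrase $P^{(k)}$ as follows. Let $\Omega$ be the probability space carrying the family $(D_v^{(i)})$. For each $\omega \in \Omega$, let $\sigma_k(\eta) \in St$ denote the first coordinate of $\mathcal{DS}(\eta + \delta^k, \mathbf 0)$ (well-defined a.s. by Lemmas \ref{lem:finite} and the maximality lemma). Then $P^{(k)}_{\eta,\eta'} = \mathbb{P}(\sigma_k(\eta) = \eta')$, and since the deck for the second stabilization is ``fresh'' — but here one must be slightly careful: in the Markov chain, the second stabilization uses new randomness. The clean way is to observe $(P^{(k)}P^{(\ell)})_{\eta,\eta'} = \sum_{\zeta} \mathbb{P}(\sigma_k(\eta) = \zeta)\,\mathbb{P}(\sigma_\ell(\zeta) = \eta')$ and realize both stabilizations on a single product space by using two independent deck-families, or equivalently by re-indexing: use cards $D_v^{(i)}$ for the first stabilization at $v$ and continue with the same deck (counters are cumulative) for the second. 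With the cumulative-counter formulation, $(P^{(k)}P^{(\ell)})_{\eta,\eta'}$ is exactly the probability over one deck-family that $\mathcal{DS}\circ(\text{add }\delta^\ell)\circ\mathcal{DS}\circ(\text{add }\delta^k)$ sends $(\eta,\mathbf 0)$ to a configuration with first coordinate $\eta'$ — and by the deterministic identity this event coincides realization-by-realization with the $k\leftrightarrow\ell$ swap, so the probabilities agree.

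**The main obstacle** I anticipate is the bookkeeping in this last paragraph: making precise that the two-stage stabilization with cumulative counters genuinely realizes the matrix product $P^{(k)}P^{(\ell)}$, i.e., that the distribution of the intermediate configuration $\zeta = \sigma_k(\eta)$ together with the conditional distribution of the continued stabilization matches $P^{(\ell)}_{\zeta,\cdot}$. This requires that the cards $D_v^{(i)}$ with $i$ beyond the counter value reached in the first stabilization are (a) independent of the first stabilization's outcome and (b) distributed according to $\lambda_v$ — both true by the i.i.d. structure of the decks and the fact that the number of cards consumed at $v$ is a stopping-time-like quantity. Once that is nailed down, the rest is the deterministic lemma-chaining described above, which is essentially immediate from the maximality lemma.
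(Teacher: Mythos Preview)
Your proposal is correct and follows essentially the same route as the paper: fix a realization of the decks, show the deterministic identity
\[
\mathcal{DS}\bigl(\mathcal{DS}(\eta+\delta^k,\mathbf 0)+(\delta^\ell,0)\bigr)=\mathcal{DS}(\eta+\delta^k+\delta^\ell,\mathbf 0)=\mathcal{DS}\bigl(\mathcal{DS}(\eta+\delta^\ell,\mathbf 0)+(\delta^k,0)\bigr)
\]
via the maximality lemma (the paper phrases this as ``the concatenation of the two stabilizing sequences is a stabilizing sequence for $\eta+\delta^k+\delta^\ell$''), and then pass to the matrix identity. The one place you are more careful than the paper is the last paragraph: the paper simply asserts that the deterministic identity yields $P^{(k)}P^{(\ell)}=P^{(\ell)}P^{(k)}$ without spelling out why the cumulative-counter picture realizes the matrix product, whereas you correctly flag that this needs the i.i.d.\ structure of the decks together with the fact that the number of cards consumed at each site during the first stabilization is a stopping time for the sequence $(D_v^{(i)})_{i\geq 1}$, so the remaining cards are again i.i.d.\ $\lambda_v$ and independent of the intermediate configuration.
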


\begin{proof}
Let $(\eta,c)$ be a stable state sandpile configuration with counters, and let $k,\ell$ be in $[N]$.
If $k=\ell$, then $P^{(k)}P^{(\ell)}=P^{(\ell)}P^{(k)}$ and the result holds.

If $k\neq \ell$, let $v_1,\ldots,v_n$ be a stabilizing sequence for $(\eta+\delta^{(k)},c)$ : we note $(\xi,d)=\mathcal{DS}(\eta+\delta^{(k)},c)=T_{v_n}\circ \cdots \circ T_{v_1}(\eta+\delta^{(k)},c)$.
Let $w_1,\ldots,w_m$ be a stabilizing sequence for $(\xi+\delta^{(\ell)},d)$.
Then $v_1,\ldots,v_n,w_1,\ldots,w_m$ is a stabilizing sequence for $(\eta+\delta^{(k)}+\delta^{(\ell)},c)$.

Thus we have $\mathcal{DS}(\mathcal{DS}(\eta+\delta^{(k)},c)+(\delta^{(\ell)},0))=\mathcal{DS}(\eta+\delta^{(k)}+\delta^{(\ell)},c)$. The same argument can be used after switching the roles of $k$ and $\ell$, which shows that $\mathcal{DS}(\mathcal{DS}(\eta+\delta^{(k)},c)+(\delta^{(\ell)},0))=\mathcal{DS}(\mathcal{DS}(\eta+\delta^{(\ell)},c)+(\delta^{(k)},0))$, allowing to conclude that $P^{(k)}P^{(\ell)}=P^{(\ell)}P^{(k)}$.
\end{proof}

\begin{prop}
	The stationary distribution for the SSM does not depend on $\mu$.
\end{prop}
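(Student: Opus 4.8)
The plan is to deduce the $\mu$-independence of the stationary distribution from the commutation relations $P^{(k)}P^{(\ell)}=P^{(\ell)}P^{(k)}$ established in the previous proposition, together with the structure of the transition matrix $P=\sum_{k=1}^N\mu(k)P^{(k)}$. First I would observe that $P$ is a genuine stochastic matrix on the finite state space $St$, so it admits at least one stationary distribution; the Markov chain is in fact irreducible on its recurrent class (adding grains anywhere eventually reaches any recurrent configuration under condition \eqref{eq:fincon}), so the stationary distribution on the recurrent states is unique. The key point is then to exhibit a left eigenvector for $P$ with eigenvalue $1$ that is manifestly independent of $\mu$.

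The main step is the following linear-algebra observation: since the $P^{(k)}$ all commute and each is stochastic (each has $\mathbf{1}=(1,\ldots,1)^{T}$ as a right eigenvector with eigenvalue $1$), they can be simultaneously analyzed, and one shows there is a common left eigenvector $\pi$ with $\pi P^{(k)}=\pi$ for every $k\in[N]$. Concretely, I would argue as follows: restrict attention to the set $R$ of recurrent configurations, which is the unique recurrent class common to all the $P^{(k)}$ (recurrence does not depend on which single site we add to, because the $P^{(k)}$ commute, so a configuration reachable from another under $P^{(k)}$'s is reachable under any mixture). On $R$, each $P^{(k)}$ is an irreducible stochastic matrix, hence has a unique stationary distribution $\pi_k$. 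Now for any $k,\ell$, the vector $\pi_k P^{(\ell)}$ satisfies $(\pi_k P^{(\ell)})P^{(k)}=\pi_k P^{(k)} P^{(\ell)}=\pi_k P^{(\ell)}$, so $\pi_k P^{(\ell)}$ is also a stationary distribution for $P^{(k)}$ on $R$ (it is nonnegative and sums to $1$ since $P^{(\ell)}$ is stochastic); by uniqueness $\pi_k P^{(\ell)}=\pi_k$. Thus $\pi_k$ is stationary for every $P^{(\ell)}$, and by symmetry $\pi_k=\pi_\ell$ for all $k,\ell$; call this common vector $\pi$. Then $\pi P=\sum_k \mu(k)\pi P^{(k)}=\sum_k\mu(k)\pi=\pi$, so $\pi$ is the (unique, on $R$) stationary distribution of the SSM and does not involve $\mu$ at all.

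The step I expect to require the most care is the claim that the recurrent class $R$ is the same for all the $P^{(k)}$ and that each $P^{(k)}$ is irreducible on it — this is where one genuinely uses condition \eqref{eq:fincon} (so that stabilization terminates and $St$ is finite) and the abelian/commutation property. One clean way to handle it: show that from any stable configuration, repeatedly applying the operators $A_k := P^{(k)}$ (in the deck-of-cards picture, with a fixed favorable realization of the cards) one can reach a fixed ``full'' recurrent configuration, and conversely; the commutation of the $A_k$ guarantees the reachability structure is symmetric in $k$. Alternatively, if the paper is content to state that there is \emph{a} stationary distribution independent of $\mu$ without insisting on uniqueness of the chain's stationary distribution on all of $St$, one can skip irreducibility entirely: take any stationary $\pi_0$ of $P^{(1)}$, average it as $\pi:=\lim_{T\to\infty}\frac1T\sum_{t=0}^{T-1}\pi_0 (P^{(2)})^t$ along a Cesàro limit, iterate over all coordinates, and check at each stage that stationarity under the already-processed $P^{(j)}$'s is preserved because everything commutes; the resulting $\pi$ is stationary for every $P^{(k)}$, hence for $P$, hence $\mu$-independent. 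I would present the uniqueness-based version as the main argument and remark that it recovers, in the ASM special case, the uniform distribution on recurrent configurations.
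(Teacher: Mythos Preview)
Your approach is correct and rests on the same commutation trick as the paper, but you apply it one level down, which makes the argument longer and introduces an avoidable difficulty. The paper works directly with two mixtures $P=\sum_k\mu(k)P^{(k)}$ and $P'=\sum_k\mu'(k)P^{(k)}$: since $PP'=P'P$, if $\pi$ is the stationary distribution of $P$ then $(\pi P')P=\pi PP'=\pi P'$, so $\pi P'$ is again stationary for $P$; by simplicity of the eigenvalue $1$ for $P$ one gets $\pi P'=\pi$, and since $\mu'$ was arbitrary the proof is over in three lines. You instead look for a common invariant vector for each individual $P^{(k)}$ and then mix. This is the same computation, but it forces you to claim that every single $P^{(k)}$ is irreducible on a common recurrent class $R$ and hence has a \emph{unique} stationary distribution there --- a claim you rightly flag as delicate, and which the paper explicitly does not assert (see its closing remark that necessary and sufficient conditions for irreducibility are unknown). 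The paper's formulation sidesteps this entirely: it only ever needs uniqueness of the stationary distribution for the full mixture $P$, never for an individual $P^{(k)}$. Your Ces\`aro-averaging alternative would salvage existence of a common invariant vector without irreducibility, but it is more machinery than needed; the paper's two-mixture argument is the streamlined version of exactly what you wrote.
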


\begin{proof}
	Equation \eqref{eq:trans} allows us to write $P=\sum_{k\in[N]}\mu(k)P^{(k)}$.
	Let us now consider $P'=\sum_{k\in[N]}\mu'(k)P^{(k)}$ with $\mu'$ an other probability distribution on $[N]$.
	Since for all $k,\ell$ in $[N]$, $P^{(k)}P^{(\ell)}=P^{(\ell)}P^{(k)}$, we can conclude that $PP'=P'P$.
	Thus, if $\pi$ is the stationary distribution of the Markov chain when sand grains are added according to $\mu$, we have $\pi PP'=\pi P'=\pi P'P$ which shows that $\pi P'$ is an eigenvector with eigenvalue $1$ for $P$, and since $P'$ is a probability matrix and $1$ is a non degenerated eigenvalue for $P$, we necessarily have $P'\pi=\pi$.
\end{proof}

\begin{rem}
Note that we do not know necessary and sufficient conditions for the chain to be irreducible and aperiodic.

However, if $\mu$ is strictly positive, the Markov chain will be irreducible on the recurrent class containing the maximal state $(M_1,M_2,\ldots,M_N)$.

In \cite{CMS}, the SSM is defined on a connected graph with a sink, and topplings are such that any partial toppling can happen. The sand grain addition law $\mu$ is also strictly positive (any site can receive a sand grain). Those conditions ensure that the SSM is irreducible on the recurrent class containing the maximal configuration $(M_1,\ldots,M_N)$. The authors of \cite{CMS} also were able to describe the set of recurrent states in this case.

An example of a case where the SSM is not aperiodic is the following : if for all $k\in V$, and for all $\nu\in\mathcal{S}_k$, $\sum_{i\in V} \nu_i$ is even, then the process will alternate between states with an even number of sand grains and states with an odd number of them. This of course works with any similar condition (if the GCD of all possible values for $\sum_{i\in V} \nu_i$ is not $1$ for example).
\end{rem}

\section*{Acknowledgments}
The author would like to thank Andrea Sportiello for sharing his vision and pertinent questions, and J\'er\'emie Bouttier for his help.

\bibliographystyle{plain}
\bibliography{ssmbib}

\end{document}